\def \N{{\mathbb N}}
\def \R{{\mathbb R}}
\def \1{{\mathbb 1}}
\theoremstyle{plain}
\newtheorem{theorem}{Theorem}
\newtheorem{proposition}{Proposition}
\newtheorem{definition}{Definition}
\newtheorem{corollary}{Corollary}
\theoremstyle{remark}
\newtheorem{Rem}{Remark}
\newtheorem{example}{Examples}
\title[G\^ateaux-Differentiability of convex functions]{G\^ateaux-Differentiability of convex functions in infinite dimension.}
\author{Mohammed Bachir and Adrien Fabre}
\date{20/02/2018\\ Laboratoire SAMM 4543, Universit\'e Paris 1 Panth\'eon-Sorbonne. France\\ Mohammed.Bachir@univ-paris1.fr}
\begin{document}
\begin{abstract} It is well known that in $\R^n$, G\^ateaux (hence Fr\'echet) differentiability of a convex continuous function  at some point is equivalent to the existence of the partial derivatives at this point. We prove that this result extends naturally to certain infinite dimensional vector spaces, in particular to Banach spaces having a Schauder basis.  
\end{abstract}


\maketitle

\section{Introduction}
Recall that if $E$ is a topological vector space and $E^*$ its topological dual, the subdifferential of a function $f : E \longrightarrow \R$ at some point $x^*\in E$ is the following subset of $E^*$:
$$\partial f(x^*):= \lbrace p\in E^*: \langle p, x-x^*\rangle \leq f(x)-f(x^*); \hspace{2mm} \forall x\in E \rbrace.$$
Let $U$ be an open subset of $E$ and $f : E\longrightarrow \R$ a function. We say that $f$ is differentiable at $x^*\in U$ in the direction of $h\in E$ if the following limit exists 
$$f'(x^* ; h):=\lim _{t\to 0 \atop t\neq 0}\,{\frac {1}{t}}{\Big (}f (x^*+t h)-f (x^*){\Big )}.$$
We say that $f$ is G\^ateaux-differentiable at $x^*\in U$, if there exists $F\in E^*$ (called the G\^ateaux-derivative of $f$ at $x^*$ and generally denoted by $d f(x^*)$) such that $f'(x^* ; h)=F(h),$
for all $h\in E$.

\vskip5mm

It is well known that if $E$ is a Hausdorff locally convex topological vector space and $f$ is  a convex continuous function then, $f$ is G\^ateaux-differentiable at $x^*\in E$ if and only if, $\partial f(x^*)$ is a singleton (see \cite[Corollary 10.g, p. 66]{M}). In this case $\partial f(x^*)=\lbrace d f(x^*)\rbrace$, where $d f(x^*)$ is the G\^ateaux-derivative of $f$ at $x^*$. For more informations on differentiability of convex functions, we refer to the paper of Moreau \cite{M} and the book of Phelps \cite{Ph}. 
\vskip5mm

This note proves two main results, Theorem \ref{thm1} and Theorem \ref{thm3}. In Theorem \ref{thm1}, we will give a necessary and sufficient condition so that, a convex function possesses a minimum on a given convex subset satisfying some general conditions in infinite dimension : Let $E$ be a topological vector space equipped  with a biorthogonal system $(e_n)_{n\geq 1}$, $(e^*_n)_{n\geq 1}$, where $(e_n)_{n\geq 1}$ is linearly independant. Let $X\subset E$ be a non-empty convex subset of $E$, $x^*\in X$ and $f: X\longrightarrow \R \cup \lbrace +\infty \rbrace$ be a convex function with a non-empty domain. We prove, under general hypothesis on $X$ and $f$, that if $f'(x^* ; e_n)$ exists for all $n\in \N^*$, then we have that $f(x^*)=\inf_{x\in X} f(x)$ if and only if $f'(x^*,e_n)=0, \hspace{2mm} \forall n \in \N^*$. This extends a result known under the G\^ateaux-differentiability assumption. In infinite dimension, the existence of $f'(x^* ; e_n)$ for all $n\in \N^*$ does not implies in general the G\^ateaux-differentiability of $f$ at $x^*$. For example, in the Banach space $l^{\infty}(\N)$, the seminorm $p(x)=\limsup_n |x_n|$ is nowhere G\^ateaux-differentiable but we have that $p'(x, e_n)=0$ for all $n\in \N^*$ and all $x\in l^{\infty}(\N)$ (see example \ref{ex1}). 
\vskip5mm
What seems surprising is that, for certain spaces of infinite dimension, in particular Banach spaces having a Schauder basis, the existence of $f'(x^* ; e_n)$ for all $n\in \N^*$ (where $f$ is convex) is equivalent to the G\^ateaux-differentiability of $f$ at $x^*$ (Theorem \ref{thm3}). More precisely, if $E$ is a Hausdorf locally convex topological vector space equipped  with a biorthogonal system $(e_n)_{n\geq 1}$, $(e^*_n)_{n\geq 1}$, where $(e_n)_{n\geq 1}$ is a topological basis (see Definition \ref{def2}) and if $f : E\longrightarrow \R$ is a convex continuous  function, then $f$ is G\^ateaux-differentiable at $x^*\in E$ if and only if $f' (x^*; e_n)$ exists for all $n \in \N^*$. This result extends, to the infinite dimension, a result that is well known in the finite dimension, namely, in $\R^n$ G\^ateaux (hence Fr\'echet) differentiability of a convex continuous function  at some point is equivalent to the existence of the partial derivatives at this point. An example illustrating this last result is the well know fact about the G\^ateaux-differentiability of the norm $\|.\|_1$ in $l^1(\N)$, which says that $\|.\|_1$ is G\^ateaux-differentiable at $(x_n)_{n\geq 1}\in l^1(\N)$ if and only if $x_n\neq 0$ for all $n\in \N^*$. From our Theorem \ref{thm3}, we can see more simply that $\|.\|_1$ is G\^ateaux-differentiable at $(x_n)_{n\geq 1}\in l^1(\N)$ if and only if it is differentiable in the directions of the basis $(e_n)_{n\geq 1}$ of $l^1(\N)$ which is equivalent to the differentiability of the absolute value $|.|$ at $x_n$ for all $n \in \N^*$, that is, if and only if $x_n\neq 0$ for all $n\in \N^*$. In fact the example of the norm $\|.\|_1$ is a particular case of a more general result given in Proposition \ref{propG}.   
\section{Preliminaries.}
Let $E$ be a topological vector space over the field $\R$ and $E^*$ its topological dual. Let $(e_n)_{n\geq 1}$ be a linearly independant familly of element of $E$ and $(e^*_n)_{n\geq 1}$ be a familly of element of $E^*$. The pair $(e_n)_{n\geq 1}$, $(e^*_n)_{n\geq 1}$ is said to be a biorthogonal system if $\langle e^*_n, e_n \rangle =1$ for all $n\in \N^*$ and  $\langle e^*_n, e_k \rangle =0$ if $n\neq 0$. The linear mappins $P^k : E \longrightarrow E$ are defined for all $k \in \N^*$ as follows

$${\displaystyle x \ {\overset {\textstyle P^k}{\longrightarrow }}\ \ P^k(x)=\sum _{n=0}^{k}\langle e^*_n , x \rangle e_{n}}.$$ 

We define the space $E^k$ as the image of $E$ by $P^k$, that is, $E^k =P^k(E)$, which is a finite dimensional vector space isomorphic to $\R^k$. Let $X$ be a subset of $E$. For all $k \in \N^*$, we denote $X^k:=P^k(X)$ and by $Int_{E^k}(X^k)$ we mean the relative interior of $X^k$, that is the interior of $X^k$ in $E^k\simeq\R^k$. 
\vskip5mm
In all this note, we assume that $E$ is a topological vector space over the field $\R$ equipped with a biorthogonal system $(e_n)_{n\geq 1}$, $(e^*_n)_{n\geq 1}$, where the familly $(e_n)_{n\geq 1}$ is linearly independant. For informations about biorthogonal systems, we reffer to \cite{D}.

\begin{definition} Let $E$ be a topological vector space equipped  with a biorthogonal system $(e_n)_{n\geq 1}$, $(e^*_n)_{n\geq 1}$, where $(e_n)_{n\geq 1}$ is linearly independant. Let $X\subset E$ be a non-empty subset of $E$ and let $x^*\in X$ be a fixed point of $E$. 

$(1)$ (Qualification condition) We say that the set $X$ is qualified at $x^*$ if the following conditions hold.
\begin{itemize}
\item  $P^k(x^*)\in Int_{E^k}(X^k)$ for all $k\in \N^*$

\item  $P^k(X-x^*)\subset X-x^*$ for all $k\in \N^*$.
\end{itemize}

$(2)$ (Pseudo-semicontinuity) Let $f : X\longrightarrow \R \cup \lbrace +\infty \rbrace$ be a function with a non-empty domain. We say that $f$ is pseudo-semicontinuous on $X$ with respect to $x^*$ if for all $x \in X$, $$\limsup_{k\longrightarrow+\infty} f(x^*+P^k(x-x^*))\leq f(x).$$

$(3)$ (Directional-differentiability) We say that $f$ is differentiable at $x^*$ in the directions $(e_n)_{n\geq 1}$ if the following limit exists for all $n\in \N^*$
$${\displaystyle f '(x^* ; e_n):=\lim _{t\to 0 \atop t\neq 0}\,{\frac {1}{t}}{\Big (}f (x^*+t e_n)-f (x^*){\Big )}.}$$
\end{definition}
It is easy to see the following proposition.
\begin{proposition}\label{prop2} The sum of two functions which are pseudo-semicontinuous with respect to some point $a$ of a non-empty subset $X$ of $E$ is also a pseudo-semicontinuous function with respect to $a$.
\end{proposition}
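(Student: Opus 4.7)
The plan is to prove Proposition \ref{prop2} directly from the definition of pseudo-semicontinuity, relying on the standard subadditivity property of the limit superior. Let $f, g : X \longrightarrow \R \cup \{+\infty\}$ be pseudo-semicontinuous at $a \in X$, and set $h := f + g$ (with the convention $r + (+\infty) = +\infty$ for every $r \in \R \cup \{+\infty\}$).

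First I would fix an arbitrary $x \in X$ and dispose of the trivial case. If either $f(x) = +\infty$ or $g(x) = +\infty$, then $h(x) = +\infty$ and the inequality
$$\limsup_{k \to +\infty} h(a + P^k(x - a)) \leq h(x) = +\infty$$
holds automatically. Hence one may assume $f(x), g(x) \in \R$.

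Next, for each $k \in \N^*$ write $u_k := f(a + P^k(x-a))$ and $v_k := g(a + P^k(x-a))$. Since the hypothesis on $f$ and $g$ gives $\limsup_k u_k \leq f(x) < +\infty$ and $\limsup_k v_k \leq g(x) < +\infty$, both sequences are eventually real-valued and bounded above. The elementary subadditivity inequality $\limsup_k (u_k + v_k) \leq \limsup_k u_k + \limsup_k v_k$, valid as soon as the right-hand side is not of indeterminate form, then yields
$$\limsup_{k \to +\infty} h(a + P^k(x-a)) \leq \limsup_{k} u_k + \limsup_{k} v_k \leq f(x) + g(x) = h(x),$$
which is exactly the pseudo-semicontinuity of $h$ at $a$ evaluated at $x$. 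Since $x$ was arbitrary, the conclusion follows.

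There is really no substantive obstacle here; the only point of caution is keeping track of possible $+\infty$ values in the domain so that the subadditivity of $\limsup$ is applied only where it is legitimate, which is handled by the case split above.
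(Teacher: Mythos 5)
Your proof is correct: the paper states Proposition \ref{prop2} without proof (``It is easy to see\dots''), and your direct verification via the subadditivity of $\limsup$, with the case split to avoid the indeterminate form when one of the values is $+\infty$, is exactly the intended elementary argument. Nothing further is needed.
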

\begin{example} \label{ex1} Let $E= l^{\infty}(\N)$ the Banach space of bounded sequences. Let $p: l^{\infty}(\N) \longrightarrow \R$ be the function defined for all $x=(x_n)\in l^{\infty}(\N)$ by $$p(x)=\limsup |x_n|.$$
Then, 

$(i)$ Clearly, $l^{\infty}(\N)$ equipped with its natural biorthogonal system $(e_n)_{n\geq 1}$, $(e^*_n)_{n\geq 1}$, is qualified at each of its points.

$(ii)$ $p$ is a continuous seminorm ($p(x)\leq \|x\|_{\infty}$ for all $x\in l^{\infty}(\N)$, thus norm continuous), is differentiable in the directions $(e_n)_{n\geq 1}$ at each $x\in l^{\infty}(\N)$ and we have $p '(x ; e_n)=0$ for all $n\in \N^*$ and all $x\in l^{\infty}(\N)$. However, $p$ is  nowhere G\^ateaux-differentiable.

$(iii)$ $p$ is pseudo-semicontinuous on $l^{\infty}(\N)$ with respect to each element $x^*$ satisfying $p(x^*)=0$, but is not pseudo-semicontinuous on $l^{\infty}(\N)$ with respect to $a$ if $p(a)\neq 0$.
\end{example}
\begin{proof}
It is well know that $p$ is a continuous (with respect the norm $\|.\|_{\infty})$ seminorm, but nowhere G\^ateaux-differentiable (see \cite[Example 1.21]{Ph}). We show that $p$ is differentiable at each $x$ in the directions $(e_n)_{n\geq 1}$. Indeed, for each fixed integer $n \in \N^*$ and each $t\in \R$, it is easy to see that $p(x+t e_n)=p(x)$. It follows that $p '(x ; e_n)=0$ for all $n\in \N^*$ and all $x\in l^{\infty}(\N)$. On the other hand, $p$ is pseudo-semicontinuous on $l^{\infty}(\N)$ with respect to each element $x^*$ satisfying $p(x^*)=0$. Indeed, it is easy to see that $p(x^*+P^k(x-x^*))=p(x^*)$ for all $x^*, x\in l^{\infty}(\N)$. So, since $p(x^*)=0$, then we have that $p(x^*+P^k(x-x^*))=0\leq p(x)$ for all $x\in l^{\infty}(\N)$. Thus, $\limsup_{k\longrightarrow+\infty} p(x^*+P^k(x-x^*))\leq p(x)$, for all $x \in X$. If $p(x^*)\neq 0$, then $p(x^*+P^k(0-x^*))=p(x^*)> 0=p(0)$ and so $p$ is not pseudo-semicontinuous on $l^{\infty}(\N)$ with respect to $x^*$ if $p(x^*)\neq 0$.
\end{proof}
\begin{definition} \label{def2} Let $E$ be a topological vector space over the field $\R$ equipped with a biorthogonal system $(e_n)_{n\geq 1}$, $(e^*_n)_{n\geq 1}$. We say that $(e_n)_{n\geq 1}$ is a topological basis of $E$ if for each $x\in E$, there exists a unique sequence $(a_n)$ of real number such that $x=\sum_{n=0}^{+\infty} a_n e_n$, where the convergence is understood with respect to the topology of $E$. In this case we have $a_n = \langle e^*_n , x \rangle$ for all $n\in \N^*$.
\end{definition}
Note that in the Banach space $(l^{\infty}(\N),\|.\|_{\infty})$ the natural basis ($e_n:=(\delta_j^n)$, where $\delta_j^n$ is the Kronecker symbol satisfying $\delta_j^n=1$ if $j=n$ and $0$ if $j\neq n$) is not a topological basis since $\|P^k(x)-x\|_{\infty}$ does not converges to $0$ in general.
\begin{example} We give two classical examples of Hausdorff locally convex topological vector space equipped with topological basis.

$(1)$ Let $\R^{\N}$ be the vector space of all real sequences. We denote $e_n:=(\delta_j^n)$ the elements of $\R^{\N}$ where $\delta_j^n$ is the Kronecker symbol satisfying $\delta_j^n=1$ if $j=n$ and $0$ if $j\neq n$. We equip this space with the distance : for all $x=(x_n)$ and $y=(y_n)$, $$d_{\R^{\N}}(x, y):=\sum_{i=1}^{+\infty} \frac{2^{-i}|x_i-y_i|}{1+|x_i-y_i|}$$
The liear map $e^*_n : x=(x_n)\mapsto x_n$ is a continuous linear functional for each $n \in \N^*$. The space $(\R^{\N},d_{\R^{\N}}(x, y))$ is a Fr\'echet space having a topological basis $(e_n)_{n\geq 1}$. Indeed, for all $x\in \R^{\N}$, $d_{\R^{\N}}(P^k(x),x) \longrightarrow 0$, when $k\longrightarrow +\infty$. Note that the pair $(e_n)_{n\geq 1}$, $(e^*_n)_{n\geq 1}$ is a a biorthogonal system.

$(2)$ Let $(E,\|.\|)$ be a Banach space. A Schauder basis $(e_n)_{n\geq 1}$ is a basis such that for each $x\in E$ there exists a unique sequence of real number $(a_n)$ such that $\|x-\sum_{n=1}^{k} a_n e_n\|\longrightarrow 0$, when $k\longrightarrow +\infty$. The linear  mappings $e^*_k : E\longrightarrow \R$, $k\in \N^*$, are defined by $e^*_k(\sum_{n=0}^{+\infty} a_n e_n)=a_k$. It follows from the Banach-Steinhaus theorem that the linear mappings $P^k : E\longrightarrow E$, $k\in \N^*$, are uniformly bounded by some constant $C$. Also, for all $k\in \N^*$, the linear functionals $e^*_k$ are bounded on $E$. In this case, $(e_n)_{n\geq 1}$, $(e^*_n)_{n\geq 1}$ is a biorthogonal system of $E$.
\end{example}
\begin{proposition} \label{prop1} Let $E$ be a topological vector space equipped with a topological basis $(e_n)_{n\geq 1}$ and a biorthogonal system $(e_n)_{n\geq 1}$, $(e^*_n)_{n\geq 1}$. Then, any uper semicontinuous function (in particular, any continuous function) $f : E \longrightarrow \R$ is pseudo-semicontinuous with respect to each point of $E$. 
\end{proposition}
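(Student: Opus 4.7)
The plan is to unwind the definitions and observe that everything reduces to sequential upper semicontinuity applied to an explicit convergent sequence. Fix an arbitrary point $x^* \in E$ and an arbitrary $x \in E$. I want to show $\limsup_{k\to+\infty} f(x^*+P^k(x-x^*)) \leq f(x)$, so I set $y_k := x^* + P^k(x-x^*)$ and I will verify that $y_k \to x$ in the topology of $E$; once that is done, upper semicontinuity of $f$ at $x$ closes the argument.

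The convergence $y_k \to x$ is exactly where the assumption that $(e_n)_{n\geq 1}$ is a topological basis enters. By Definition \ref{def2}, any element $z\in E$ can be written as $z = \sum_{n=0}^{+\infty} \langle e_n^*, z\rangle e_n$ with convergence in $E$, which is precisely the statement that $P^k(z) \to z$ as $k\to+\infty$. Applying this to $z = x - x^*$ gives $P^k(x-x^*) \to x - x^*$, hence $y_k = x^* + P^k(x-x^*) \to x$ in $E$.

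It remains to invoke upper semicontinuity of $f$ at the point $x$: since $y_k \to x$ and $f$ is upper semicontinuous, one has $\limsup_{k\to+\infty} f(y_k) \leq f(x)$, which is exactly the required inequality. (Sequential upper semicontinuity is immediate from the topological definition when the relevant convergence is sequential, as is the case here.) I do not anticipate any real obstacle; the only step deserving attention is the justification that the partial sums $P^k(z)$ actually converge to $z$, but this is built into the definition of a topological basis, so the argument is essentially a direct verification.
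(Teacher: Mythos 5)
Your proof is correct and follows essentially the same route as the paper's: identify $x^*+P^k(x-x^*)\to x$ from the topological basis property, then apply upper semicontinuity of $f$ at $x$. Your parenthetical remark that topological upper semicontinuity yields the sequential version is the right (and only) point needing care, and it holds in any topological space.
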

\begin{proof} Since $(e_n)_{n\geq 1}$ a topological basis, then $P^k(x-x^*)\longrightarrow x-x^*$ for the topology of $E$, when $k\longrightarrow +\infty$ (equivalently $x^*+ P^k(x-x^*)\longrightarrow x$ for the topology of $E$, when $k\longrightarrow +\infty$) and since $f$ is uper semicontinuous, then  $\limsup_k f(x^*+P^k(x-x^*))\leq f(x)$ for all $x\in E$.
\end{proof} 
Note that in Example \ref{ex1}, the function $p$ is norm continuous on $l^{\infty}(\N)$ but not pseudo-semicontinuous with respect $a\in l^{\infty}(\N)$ if $p(a)\neq0$. This is due to the fact that the natual basis of $l^{\infty}(\N)$ is not a topological basis.
\section{The main results.}

We give below, necessary and sufficient condition of optimality. The proof is based on a reduction to the finite dimension. 
\begin{theorem} \label{thm1} Let $E$ be a topological vector space equipped  with a biorthogonal system $(e_n)_{n\geq 1}$, $(e^*_n)_{n\geq 1}$, where $(e_n)_{n\geq 1}$ is linearly independant. Let $X\subset E$ be a non-empty convex subset of $E$ and let $x^*\in X$. Suppose that $X$ is qualified at $x^*$. Let $f: X\longrightarrow \R \cup \lbrace +\infty \rbrace$ be a convex function with a non-empty domain, which is pseudo-semicontinuous with respect to $x^*$ and differentiable at $x^*$ in the directions $(e_n)_{n\geq 1}$. Then, the following assertions are equivalent.

$(a)$ $f(x^*)=\inf_{x\in X} f(x)$

$(b)$  $f'(x^*,e_n)=0, \hspace{2mm} \forall n \in \N^*$
 
\end{theorem}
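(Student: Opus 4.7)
The plan is to prove the two implications separately. The direction $(a) \Rightarrow (b)$ is straightforward, while $(b) \Rightarrow (a)$ will be obtained by reducing to a finite-dimensional problem on $E^k$, invoking the classical $\R^k$ result recalled in the introduction, and then passing to the limit $k \to +\infty$ via the pseudo-semicontinuity hypothesis.

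For $(a) \Rightarrow (b)$, fix $n \in \N^*$. I would first check that $x^* + t e_n \in X$ for all sufficiently small $t \in \R$. Since $P^n(x^*) \in Int_{E^n}(X^n)$, for $|t|$ small enough there exists $y_t \in X$ with $P^n(y_t) = P^n(x^*) + t e_n$, so that $P^n(y_t - x^*) = t e_n$; the qualification condition $P^n(X - x^*) \subset X - x^*$ then gives $t e_n \in X - x^*$. The one-variable convex function $t \mapsto f(x^* + t e_n)$ thus has a minimum at $0$, and since $f'(x^*; e_n)$ exists, it must equal $0$.

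For $(b) \Rightarrow (a)$, fix $y \in X$ and $k \in \N^*$. I would consider the convex set $U_k := E^k \cap (X - x^*)$ and the function $\phi_k : U_k \to \R$ defined by $\phi_k(z) := f(x^* + z)$. The crucial observations are: $\phi_k$ is convex on the convex set $U_k$; $0 \in Int_{E^k}(U_k)$ by the same qualification argument as above applied to $P^k$; the partial derivatives of $\phi_k$ at $0$ in the directions $e_1, \ldots, e_k$ all equal $f'(x^*; e_n) = 0$; and $P^k(y - x^*) \in U_k$ by the inclusion $P^k(X - x^*) \subset X - x^*$. Since $\phi_k$ is a convex function on the finite-dimensional space $E^k$, continuous at the interior point $0$ of its effective domain, the classical $\R^k$ result gives that $\phi_k$ is G\^ateaux-differentiable at $0$ with zero derivative, so $0$ is a global minimum of $\phi_k$ on $U_k$. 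In particular $f(x^* + P^k(y - x^*)) = \phi_k(P^k(y - x^*)) \geq \phi_k(0) = f(x^*)$. Taking $\limsup$ as $k \to +\infty$ and using the pseudo-semicontinuity of $f$ with respect to $x^*$ then yields $f(x^*) \leq f(y)$.

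The main obstacle is the bookkeeping in the reduction step: one has to exploit both parts of the qualification condition simultaneously in order to guarantee, on the one hand, that $0$ is an interior point of $U_k$ in $E^k$ (so that the finite-dimensional minimum principle applies to $\phi_k$ at $0$) and, on the other hand, that $P^k(y - x^*)$ lies in $U_k$ (so that the resulting finite-dimensional inequality can be transferred back to $X$). Once these two facts are in place, the classical finite-dimensional optimality criterion combined with pseudo-semicontinuity finishes the argument.
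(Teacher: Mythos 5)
Your proof is correct and follows essentially the same route as the paper's: both directions reduce to the finite-dimensional sections $E^k$ via the projections $P^k$, invoke the classical $\R^k$ result for convex functions with vanishing partial derivatives at an interior point, and pass to the limit using pseudo-semicontinuity. Your $U_k = E^k \cap (X-x^*)$ with $\phi_k(z)=f(x^*+z)$ is just a translate of the paper's $X^k$ with $f^k$, so the two arguments coincide (your justification that $x^*+te_n\in X$ for small $t$ in the direction $(a)\Rightarrow(b)$ is in fact slightly more explicit than the paper's).
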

\begin{proof} The part $(a)\Longrightarrow (b)$ is easy. indeed, suppose that $f(x^*)=\inf_{x\in X} f(x)$. Then, we have that 
$$ 0 \leq f(x)-f(x^*) \hspace{2mm} \forall x\in X.$$
In particular, since $X$ is qualified at $x^*$, for all $n\in \N^*$ there exists $\alpha_n>0$ such that for all $|t| < \alpha_n$, we have that $x^*+te_n\in X$ and so
$$ 0 \leq f(x^* + t e_n)-f(x^*).$$
Thus, we get that $0\leq \lim_{t \longrightarrow 0^+} \frac{f(x^* + t e_n)-f(x^*)}{t} = f'(x^*; e_n)$. Simmilarly, we have $0 \geq \lim_{t \longrightarrow 0^-} \frac{f(x^* + t e_n)-f(x^*)}{t} = f'(x^*; e_n)$. Hence, $f'(x^*,e_n)=0, \hspace{2mm} \forall n \in \N^*$.

Now, we prove $(b)\Longrightarrow (a)$. Let us define $f^k: X^k\subset E^k \longrightarrow \R \cup \lbrace +\infty \rbrace$ as follows: for all $x\in X$, $$f^k(P^k(x)):=f(x^*+P^k(x-x^*)).$$
Then, for all $k\in \N^*$, we have that $f^k$ is Fr\'echet-differentiable at $P^k(x^*)$. Indeed, for all $n \leq k$ we have that $P^k(e_n)=e_n$ and we have that $f^k(P^k(x^*))=f(x^*)$. Thus, since $P^k(x^*)\in Int_{E^k}(X^k)$ for all $k\in \N^*$ (by the qualification condition of $X$ at $x^*$), then for all $n\leq k$ and all small $t$ we have
$$f^k(P^k(x^*)+t e_n)-f^k(P^k(x^*))=f(x^*+t e_n)-f(x^*).$$
It follows that
\begin{eqnarray}\label{eq4bis}
(f^k)'(P^k(x^*); e_n )&=& f'(x^*; e_n).                                         
\end{eqnarray}
This shows that $(f^k)'(P^k(x^*); e_n)$ exists for each $e_n\in E^k$, $n\in \lbrace1,...,k\rbrace$. Since $f$ is convex, then $f^k$ is also convex on the convex set $X^k$. Since $E^k$ is of finite dimension and $((e_n)_{1\leq n \leq k})$ is a basis of $E^k$, then it is well known (see \cite[Theorem 6.1.1.]{KK}) that $f^k$ is Fr\'echet-differentiable at $P^k(x^*)$. 
\vskip5mm
Thanks to the equations $(b)$ and (\ref{eq4bis}), we have that for all $k\in \N^*$,
 \begin{eqnarray*}\label{eq4}
 \nabla f^k(P^k(x^*))=0,
 \end{eqnarray*}
where $\nabla f^k(P^k(x^*))$ denotes the gradiant of $f^k$ at $P^k(x^*)$. It follows that 
\begin{eqnarray}\label{eq6}
 f^k(P^k(x^*)) =\inf_{y\in X^k} f^k(y),
\end{eqnarray}
since $f^k$ is a convex function defined on the convex set $X^k\subset E^k$ and $P^k(x^*)\in Int_{E^k}(X^k)$.

For all $x \in X$ and all $k\in \N^*$, we have that $P^k(x)\in X^k$, then by using (\ref{eq6}) we get
$$f(x^*)= f^k(P^k(x^*)) =\inf_{y\in X^k} f^k(y) \leq f^k(P^k(x)):=f(x^*+P^k(x-x^*)).$$ 
Since $f$ is pseudo-semicontinuous on $X$ with respect $x^*$, then by taking the limit in the above inequality we obtain that
$$f(x^*)\leq \limsup_{k \longrightarrow +\infty} f(x^*+P^k(x-x^*)) \leq f(x).$$
It follows that $f(x^*)=\inf_{x\in X} f(x)$.
\end{proof}
\begin{Rem} We give the following comments about Theorem \ref{thm1}: 

$(1)$ The above Theorem shows that, for a convex function which is pseudo-semicontinuous with respect to $x^*\in E$ and differentiable at $x^*$ in the directions $(e_n)_{n\geq 1}$, a necessary and sufficient condition to have a minimum at $x^*$ is to satisfy $ f'(x^*,e_n)=0, \hspace{2mm} \forall n \in \N^*$. In several examples (see the examples in this note), it is easy to calculate the derivative $f'(x^*,e_n)$ and also to solve $ f'(x^*,e_n)=0, \hspace{2mm} \forall n \in \N^*$. Thus, the candidate for the minimum can be exhibited. Since the condition is also sufficient, we get the point that realizes the minimum. 

$(2)$ In infinite dimension, the differentiability of $f$ in the directions $(e_n)_{n\geq 1}$ at some point $x^*$, does not implies its G\^ateaux-differentiable at $x^*$. An example in the space $l^{\infty}(\N)$ illustrating this situation was given in Example \ref{ex1}. However, in Hausdorf locally convex topological vector spaces equipped  with a biorthogonal system $(e_n)_{n\geq 1}$, $(e^*_n)_{n\geq 1}$, where $(e_n)_{n\geq 1}$ is a topological basis, the situation is different as we show it in Theorem \ref{thm3} below.

$(3)$ If $E$ is a topological vector space equipped with a topological basis $(e_n)_{n\geq 1}$ and a biorthogonal system $(e_n)_{n\geq 1}$, $(e^*_n)_{n\geq 1}$. Then, any uper semicontinuous function (in particular, any continuous function) $f: E \longrightarrow \R$ is pseudo-semicontinuous with respect to each point of $E$ (see Proposition \ref{prop1}). This is not the case for the Banach space $l^{\infty}(\N)$ (see Example \ref{ex1}).

$(4)$ The condition of pseudo-semicontinuity cannot be dropped from the hypothesis of Theorem \ref{thm1} as we will see it in Exemple \ref{ex2}, where $E=l^{\infty}(\N)$.
 
\end{Rem}

We obtain the following immediate corollary. 
\begin{corollary}\label{cor1} Let $E$ be a topological vector space equipped  with a biorthogonal system $(e_n)_{n\geq 1}$, $(e^*_n)_{n\geq 1}$, where $(e_n)_{n\geq 1}$ is linearly independant. Let $x^*\in E$, $p\in E^*$ (the topological dual of $E$) and  $f: E\longrightarrow \R$ be a convex function which is pseudo-semicontinuous with respect to $x^*$ and differentiable at $x^*$ in the directions $(e_n)_{n\geq 1}$. Then, the following assertions are equivalent.

$(a)$ $p\in \partial f(x^*)$

$(b)$ $f'(x^*,e_n)=\langle p, e_n\rangle, \hspace{2mm} \forall n \in \N^*$

\end{corollary}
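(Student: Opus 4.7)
The plan is to derive the corollary from Theorem~\ref{thm1} by translating the subdifferential condition into a minimization problem. Introduce the auxiliary function $g \colon E \to \R$ defined by $g(x) := f(x) - \langle p, x - x^*\rangle$. The membership $p \in \partial f(x^*)$ is equivalent to $g(x^*) \leq g(x)$ for all $x \in E$, i.e.~to $g(x^*) = \inf_{x \in E} g(x)$, so condition (a) becomes a minimization statement about $g$. On the directional side, linearity of $\langle p, \cdot\rangle$ gives $g'(x^*; e_n) = f'(x^*; e_n) - \langle p, e_n\rangle$ for every $n$, so condition (b) is exactly $g'(x^*; e_n) = 0$ for every $n \in \N^*$. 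Thus it suffices to apply Theorem~\ref{thm1} to $g$ with $X = E$.

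The verification of the hypotheses is mostly routine. The whole space $E$ is qualified at $x^*$: the relative interior of $E^k = P^k(E)$ inside itself equals $E^k$, so $P^k(x^*) \in Int_{E^k}(E^k)$, and $P^k(E) \subseteq E$ is immediate. Convexity of $g$ and its differentiability at $x^*$ in the directions $(e_n)$ follow at once from the corresponding properties of $f$. Pseudo-semicontinuity of $g$ at $x^*$ reduces, via Proposition~\ref{prop2}, to pseudo-semicontinuity of the linear correction $x \mapsto -\langle p, x - x^*\rangle$ at $x^*$, that is, to the inequality
\[
\limsup_{k\to\infty}\, \bigl\langle -p,\, P^k(x-x^*)\bigr\rangle \;\leq\; \bigl\langle -p,\, x-x^*\bigr\rangle \qquad \text{for every } x \in E.
\]
Granted this, Theorem~\ref{thm1} yields the equivalence of (a) and (b) in one stroke.

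The direction (a)$\Rightarrow$(b) also admits an elementary direct proof if preferred: substituting $x = x^* + t e_n$ into $\langle p, x - x^*\rangle \leq f(x) - f(x^*)$, dividing by $t$, and letting $t \to 0^+$ and $t \to 0^-$ separately, gives respectively $\langle p, e_n\rangle \leq f'(x^*; e_n)$ and $\langle p, e_n\rangle \geq f'(x^*; e_n)$, hence equality.

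The main delicate step will be the pseudo-semicontinuity check for the linear term, which is not a consequence of the continuity of $p$ alone: Example~\ref{ex1} already exhibits a continuous function that fails to be pseudo-semicontinuous when the biorthogonal system is not a topological basis. When $(e_n)_{n \geq 1}$ is a topological basis, however, $P^k(x - x^*) \to x - x^*$ in $E$ and continuity of $p$ immediately gives the required inequality as an equality, which is the setting in which the corollary is naturally applied.
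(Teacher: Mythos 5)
Your proof takes essentially the same route as the paper: both reduce to Theorem \ref{thm1} applied to $f-p$ (your $g$ differs from $f-p$ only by the constant $\langle p, x^*\rangle$) on $X=E$, which is qualified at every point. The one step you single out as delicate --- pseudo-semicontinuity of the linear correction, i.e. $\limsup_{k}\langle -p, P^k(x-x^*)\rangle \leq \langle -p, x-x^*\rangle$ for all $x$ --- is indeed the crux, and you are right that it does not follow from continuity of $p$ alone. Be aware that the paper's own proof does not close this step either: it invokes Proposition \ref{prop1}, whose hypothesis that $(e_n)_{n\geq 1}$ be a topological basis is not among the assumptions of the corollary. Applying the displayed inequality to both $x$ and $2x^*-x$ shows that the linear term is pseudo-semicontinuous with respect to $x^*$ if and only if $\langle p, P^k(y)\rangle \to \langle p, y\rangle$ for every $y\in E$; this holds automatically when $(e_n)_{n\geq 1}$ is a topological basis (the setting of Theorem \ref{thm3}, where the corollary is actually used), but can fail for a general biorthogonal system --- for instance a Banach limit $p$ on $l^{\infty}(\N)$ satisfies $\langle p, P^k(y)\rangle =0$ for all $k$ while $\langle p, y\rangle$ may be nonzero. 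So your argument is as complete as the paper's, and your explicit identification of the missing hypothesis is an improvement; to make the statement airtight one should either add the topological-basis assumption or assume directly that the functional $-p$ is pseudo-semicontinuous with respect to $x^*$.
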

\begin{proof} We apply Theorem \ref{thm1} with $X=E$ (which is automatically qualified at each of its points) and the convex function $f-p$ which is pseudo-semicontinuous with respect to $x^*$ (see Proposition \ref{prop1} and Proposition \ref{prop2}).
\end{proof}
As consequence, we obtain the following theorem that we announced in the abstract. 

\begin{theorem} \label{thm3} Let $E$ be a Hausdorf locally convex topological vector space equipped  with a biorthogonal system $(e_n)_{n\geq 1}$, $(e^*_n)_{n\geq 1}$, where $(e_n)_{n\geq 1}$ is a topological basis. Let $f : E\longrightarrow \R$ be a convex continuous  function. Then, $f$ is G\^ateaux-differentiable at $x^*\in E$ if and only if $f' (x^*; e_n)$ exists for all $n \in \N^*$. In this case, $d f(x^*) (h)= \sum_{n=1}^{+\infty}f' (x^*; e_n)\langle e^*_n , h\rangle$, for all $h\in E$.
\end{theorem}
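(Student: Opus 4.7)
The ``only if'' direction should be immediate: G\^ateaux-differentiability of $f$ at $x^*$ implies the existence of every directional derivative, in particular $f'(x^*;e_n)=\langle df(x^*),e_n\rangle$ for each $n\in\N^*$. So the substance of the theorem lies in the converse, and my plan is to exploit the characterization recalled in the introduction: for a convex continuous function on a Hausdorff locally convex space, G\^ateaux-differentiability at $x^*$ is equivalent to $\partial f(x^*)$ being a singleton. Hence it will suffice to show that $\partial f(x^*)$ contains exactly one element.

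First I would check that the hypotheses of Corollary \ref{cor1} are in force. Since $(e_n)_{n\geq 1}$ is a topological basis and $f$ is continuous, Proposition \ref{prop1} gives that $f$ is pseudo-semicontinuous with respect to $x^*$; combined with the standing assumption that $f'(x^*;e_n)$ exists for every $n$, Corollary \ref{cor1} then yields the characterization that $p\in\partial f(x^*)$ if and only if $\langle p,e_n\rangle=f'(x^*;e_n)$ for every $n\in\N^*$. In particular, any two elements of $\partial f(x^*)$ must coincide on each basis vector.

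Next, the uniqueness of such a $p$ should follow directly from the definition of topological basis. For arbitrary $h\in E$, the expansion $h=\sum_{n=1}^{+\infty}\langle e^*_n,h\rangle e_n$ converges in $E$, so if $p,q\in\partial f(x^*)$ then continuity of $p-q\in E^*$ forces
$$\langle p-q,h\rangle=\sum_{n=1}^{+\infty}\langle e^*_n,h\rangle\langle p-q,e_n\rangle=0,$$
hence $p=q$. Non-emptiness of $\partial f(x^*)$ is the classical Hahn--Banach consequence for convex continuous functions on Hausdorff locally convex spaces, so $\partial f(x^*)=\{df(x^*)\}$ is indeed a singleton and $f$ is G\^ateaux-differentiable at $x^*$. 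Applying the continuous functional $df(x^*)$ to the basis expansion of an arbitrary $h\in E$, and using $\langle df(x^*),e_n\rangle=f'(x^*;e_n)$, then yields the announced formula $df(x^*)(h)=\sum_{n=1}^{+\infty}f'(x^*;e_n)\langle e^*_n,h\rangle$.

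The main obstacle I anticipate is conceptual rather than computational: recognizing that Corollary \ref{cor1}, together with the fact that continuous linear functionals on $E$ are determined by their values on a topological basis, already assembles the whole argument. It is important to note that the existence of the directional limits along the $e_n$'s is \emph{not} used to \emph{construct} an element of $\partial f(x^*)$ by hand (which would require convergence and continuity of the formal series $h\mapsto\sum_n f'(x^*;e_n)\langle e^*_n,h\rangle$), but merely to pin down any such element via Corollary \ref{cor1}; the existence is then supplied by the standard subdifferentiability of convex continuous functions.
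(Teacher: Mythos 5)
Your proposal is correct and follows essentially the same route as the paper's own proof: reduce to showing $\partial f(x^*)$ is a singleton via Moreau's characterization, verify pseudo-semicontinuity through Proposition \ref{prop1}, and apply Corollary \ref{cor1} to see that any two subgradients agree on every $e_n$. In fact you make explicit two steps the paper leaves implicit --- that $p=q$ follows from the continuity of $p-q$ applied to the basis expansion of an arbitrary $h$, and the derivation of the series formula for $df(x^*)(h)$ --- so your write-up is, if anything, slightly more complete.
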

\begin{proof} The "only if" part is trivial. Let us prove the "if" part. We know from \cite[Proposition 10.c, p.60, ]{M} that $\partial f(x^*)\neq \emptyset$ for each point $x^*$ of $E$, since $f$ is continuous. We also know from \cite[Corollary 10.g, p. 66]{M}, that $\partial f(x^*)$ is a singleton if and only if $f$ is G\^ateaux-differentiable at $x^*$. On the other hand, using Proposition \ref{prop1}, $f$ is pseudo-semicontinuous on $E$ with respect to each point since it is continuous.  So, let $p, q\in \partial f(x^*)$. Hence, from Corollary \ref{cor1}, we have that $\langle p, e_n\rangle =f'(x^*,e_n)=\langle q, e_n\rangle, \hspace{2mm} \forall n \in \N^*$. It follows that $p=q$ and so that $\partial f(x^*)$ is a singleton, which implies that $f$ is G\^ateaux-differentiable at $x^*$. 
\end{proof}
Note that the result of Theorem \ref{thm3} fails for the Banach space $(l^{\infty}, \|.\|_{\infty})$, see Example \ref{ex1}.
\vskip5mm
It is well know (see for instance \cite[Examples 1.4.]{Ph}) that the norm $\|x\|_1=\sum_{n\geq 1} |x_n|$ in $l^1(\N)$ is G\^ateaux-differentiable at $x=(x_n)$ if and only if $x_n\neq 0$ for all $n\in \N^*$. This fact is a particular case of a more general result given in the following proposition, which is a consequence of Theorem \ref{thm3}. It suffices to take $u_n (t)=|t|$ for all $t\in \R$ and all $n\in \N^*$ in the following proposition.
\begin{proposition} \label{propG} Let $(E,\|.\|)$ be a Banach space having a Schauder basis $(e_n)_{n\geq 1}$ and let $(e_n)_{n\geq 1}$, $(e^*_n)_{n\geq 1}$ be a biorthogonal system. For each $n\in \N$, let $u_n : \R\longrightarrow \R$ be a convex function. Suppose that that $\sum_{n=1}^{+\infty} u_n(\langle e^*_n, x \rangle)$ is a convegrent series for each $x\in E$. 
Let $f : E \longrightarrow \R$ be the function defined by 
$$f(x):= \sum_{n=1}^{+\infty} u_n(\langle e^*_n, x \rangle).$$ 
Then, $f$ is G\^ateaux-differentiable at $x\in E$, if and only if the function $u_n$ is derivable at $\langle e^*_n, x \rangle$, for all $n\in \N^*$. 
\end{proposition}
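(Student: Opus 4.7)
The plan is to apply Theorem \ref{thm3} to $f$. To do so I need to check that $f$ is convex and continuous on $E$; after that, the G\^ateaux-differentiability of $f$ at $x$ reduces to the existence of $f'(x;e_n)$ for every $n \in \N^*$, which I can compute by hand.

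Convexity is immediate: each $u_n$ is convex and each $\langle e_n^*, \cdot\rangle$ is linear, so every partial sum $S_k(x):=\sum_{n=1}^{k} u_n(\langle e_n^*, x\rangle)$ is convex, and the pointwise limit $f(x)=\lim_k S_k(x)$ inherits convexity.

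The main obstacle is continuity of $f$. Since $(e_n)_{n\geq 1}$ is a Schauder basis, each coordinate functional $e_n^*$ is continuous on $E$; since each $u_n:\R\to\R$ is convex, it is automatically continuous; hence every partial sum $S_k$ is continuous on $E$. The hypothesis that the series converges pointwise then gives $f=\liminf_k S_k$, so $f$ is lower semicontinuous. A finite-valued, lower semicontinuous convex function on a Banach space is continuous: the sublevel sets $\lbrace f \leq m \rbrace$ are closed and convex, they cover $E$, so by the Baire category theorem at least one of them has non-empty interior; $f$ is therefore bounded above on a non-empty open set, and a convex function on a Banach space that is bounded above on an open set is continuous there and, being finite everywhere, continuous on all of $E$.

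Now Theorem \ref{thm3} applies: $f$ is G\^ateaux-differentiable at $x$ if and only if $f'(x;e_n)$ exists for every $n\in\N^*$. By biorthogonality we have $\langle e_k^*, x+te_n\rangle=\langle e_k^*, x\rangle$ for $k\neq n$ and $\langle e_n^*, x+te_n\rangle=\langle e_n^*, x\rangle+t$, so every term in the series except the $n$-th cancels in the difference quotient:
$$\frac{f(x+te_n)-f(x)}{t}=\frac{u_n(\langle e_n^*, x\rangle+t)-u_n(\langle e_n^*, x\rangle)}{t}.$$
Letting $t\to 0$ shows that $f'(x;e_n)$ exists if and only if $u_n$ is derivable at $\langle e_n^*, x\rangle$, in which case $f'(x;e_n)=u_n'(\langle e_n^*, x\rangle)$. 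Combining this equivalence with Theorem \ref{thm3} yields the proposition, and the formula in Theorem \ref{thm3} then gives $df(x)(h)=\sum_{n\geq 1} u_n'(\langle e_n^*, x\rangle)\langle e_n^*, h\rangle$ at points of differentiability.
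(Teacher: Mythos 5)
Your overall route is the same as the paper's: establish that $f$ is convex and continuous, invoke Theorem \ref{thm3} to reduce G\^ateaux-differentiability to the existence of the directional derivatives $f'(x;e_n)$, and then observe via biorthogonality that the difference quotient of $f$ in the direction $e_n$ collapses to the difference quotient of $u_n$ at $\langle e_n^*,x\rangle$. That last computation is correct (the termwise cancellation is legitimate because both series converge by hypothesis), and the paper does exactly the same thing, merely asserting the continuity of $f$ without argument. You are right that continuity is the point that actually needs work — a convex function finite on all of a Banach space need not be continuous (a discontinuous linear functional is a counterexample) — so your instinct to prove it is sound.

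However, the step you use to get there is wrong as stated: from the pointwise convergence $f=\lim_k S_k=\liminf_k S_k$ you conclude that $f$ is lower semicontinuous because each $S_k$ is continuous. A $\liminf$ (or pointwise limit) of continuous functions is not lower semicontinuous in general; only a supremum of lower semicontinuous functions is automatically lower semicontinuous, and the partial sums $S_k$ need not be monotone since the $u_n$ may take negative values. Consequently you have not shown that the sublevel sets $\lbrace f\leq m\rbrace$ are closed, and the Baire argument as written does not apply. The fix is to run Baire on the sets $F_m:=\lbrace x\in E:\ S_k(x)\leq m\ \ \forall k\in\N^*\rbrace=\bigcap_k\lbrace S_k\leq m\rbrace$, which are closed and convex because each $S_k$ is continuous and convex, and which cover $E$ because for each $x$ the convergent sequence $(S_k(x))_k$ is bounded. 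Some $F_m$ then has nonempty interior, so all the $S_k$, and hence $f$, are bounded above by $m$ on a ball; a convex function finite everywhere and bounded above on a nonempty open set is locally Lipschitz, hence continuous on all of $E$. With that repair the rest of your argument, and the final formula $df(x)(h)=\sum_{n\geq1}u_n'(\langle e_n^*,x\rangle)\langle e_n^*,h\rangle$, goes through.
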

\begin{proof} First, we have that $f$ is convex and continuous on $E$ since $u_n$ is convex for each $n\in \N^*$ and the domaine of $f$ is $E$. On the other hand, it is clear that for each $n \in \N$, we have that $f'(x,e_n)$ exists if and only if $u_n$ is derivable at $\langle e^*_n, x \rangle$. Thus, using Theorem \ref{thm3}, we obtain the conclusion.

\end{proof} 

\begin{proposition} \label{propGG} Let $(E,\|.\|)$ be a Banach space having a Schauder basis $(e_n)_{n\geq 1}$ and let $(e_n)_{n\geq 1}$, $(e^*_n)_{n\geq 1}$ be a biorthogonal system and let $x^* \in E$ be a fixed points. For each $k\in \N$, let $f_k : E\longrightarrow \R$ be a convex continuous function such that :

$(i)$ for all $n \in \N^*$, there exists $a_n>0$ such that for all $k\in \N^*$, $f_k' (x ; e_n)$ exists for all $x\in I_{x^*,a_n}:=\lbrace x^* +t e_n : |t|<a_n \rbrace$

$(ii)$ the series $\sum_{k=1}^{+\infty} f_k (x)$ converges pointwise to a function $f$.

$(iii)$ for all $n \in \N^*$, the series $\sum_{k=1}^{+\infty} f_k' (x ; e_n)$ converges uniformly with respect to  $x\in I_{x^*,a_n}$.

\vskip5mm

Then, for all $k\in \N^*$, $f_k$ is G\^ateaux-differentiable at $x^*$, the convex continuous function $f$ is G\^ateaux-differentiable at $x^*$ and we have $f'(x^*, e_n)=\sum_{k=1}^{+\infty} f'_k(x^*, e_n)$ for all $n \in \N^*$. 
\end{proposition}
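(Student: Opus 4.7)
My plan is to handle each $f_k$ individually using Theorem \ref{thm3}, then invoke the classical theorem on term-by-term differentiation of numerical series in one real variable to compute $f'(x^*; e_n)$, and finally apply Theorem \ref{thm3} a second time to $f$ itself. First I would observe that each $f_k$ is a convex continuous function on $E$ such that $f_k'(x^*; e_n)$ exists for every $n \in \N^*$, since $x^* \in I_{x^*, a_n}$ by hypothesis $(i)$. As $(e_n)_{n \geq 1}$ is a Schauder basis (hence a topological basis in the sense of Definition \ref{def2}), Theorem \ref{thm3} applies and yields that each $f_k$ is G\^ateaux-differentiable at $x^*$.

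Next I would fix $n \in \N^*$ and reduce the problem to a one-dimensional one by setting $\varphi_k(t) := f_k(x^* + t e_n)$ for $t \in (-a_n, a_n)$. Each $\varphi_k$ is convex and, by $(i)$, differentiable on $(-a_n, a_n)$ with $\varphi_k'(t) = f_k'(x^* + t e_n; e_n)$. Hypothesis $(ii)$ guarantees that $\sum_k \varphi_k(0) = f(x^*)$ converges, and $(iii)$ states that $\sum_k \varphi_k'$ converges uniformly on $(-a_n, a_n)$. The classical term-by-term differentiation theorem for series of real-valued functions of one real variable then yields that $\varphi(t) := \sum_k \varphi_k(t)$ is well-defined and differentiable on $(-a_n, a_n)$, with $\varphi'(t) = \sum_k \varphi_k'(t)$. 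Evaluating at $t = 0$ gives $f'(x^*; e_n) = \sum_k f_k'(x^*; e_n)$ for every $n \in \N^*$.

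Since $f$ is convex as a pointwise sum of convex functions, and $f'(x^*; e_n)$ exists for every $n$ by the previous step, a second application of Theorem \ref{thm3} to $f$ will conclude that $f$ is G\^ateaux-differentiable at $x^*$, with G\^ateaux-derivative $df(x^*)(h) = \sum_n f'(x^*; e_n)\langle e^*_n, h\rangle$. The main obstacle I anticipate is justifying the continuity of $f$ on $E$, which is a standing hypothesis of Theorem \ref{thm3}. I would address this either by invoking a Baire-category argument showing that a pointwise limit of continuous convex functions which is finite everywhere on a Banach space is necessarily continuous, or more directly by integrating $(iii)$ to obtain uniform convergence of $\sum_k f_k$ along each segment $I_{x^*, a_n}$ and then combining this with convexity to bound $f$ from above on a neighborhood of $x^*$.
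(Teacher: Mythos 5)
Your proof follows essentially the same route as the paper's: apply Theorem \ref{thm3} to each $f_k$ using hypothesis $(i)$, restrict to the line $t \mapsto x^* + t e_n$ (the paper's $\tilde{f}_{k,n,x^*}$ is your $\varphi_k$) and invoke the classical term-by-term differentiation theorem with $(ii)$--$(iii)$ to obtain $f'(x^*;e_n)=\sum_{k}f_k'(x^*;e_n)$, then apply Theorem \ref{thm3} a second time to $f$. Your concern about justifying the continuity of $f$ (a hypothesis of Theorem \ref{thm3}) is well placed --- the paper simply asserts it in the statement without proof --- and your proposed Baire-category argument for a finite pointwise limit of continuous convex functions is a sound way to close that gap.
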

\begin{proof} From part $(i)$ and Theorem \ref{thm3}, we have that the function $f_k$ is G\^ateaux-differentiable at $x^*$, for all $k\in \N^*$. Let us set $\tilde{f}_{k,n,x^*}(t):= f_k(x^*+t e_n)$ and $\tilde{f}_{n,x^*}(t):=f(x^*+te_n)$, for each $k, n \in \N$ and for all $t\in ]-a_n,a_n[$. Using parts $(i)$-$(iii)$, we get that for all $n\in \N^*$, the series $\sum_{k=1}^{+\infty} \tilde{f}'_{k,n,x^*}(t)$ is uniformly convergent on $]-a_n,a_n[$ and the series $\sum_{k=1}^{+\infty} \tilde{f}_{k,n,x^*}(0)$ converges. Thus, for all $n\in \N^*$ the series $\tilde{f}_{n,x^*}(t)=\sum_{k=1}^{+\infty} \tilde{f}_{k,n,x^*}(t)$ is differentiable on $]-a_n,a_n[$ in particular it is differentiable at $t=0$ and we have that $\tilde{f}'_{n,x^*}(0)=\sum_{k=1}^{+\infty} \tilde{f}'_{k,n,x^*}(0)$. In other words, we have that for all $n\in \N^*$, $f'(x^* ; e_n)$ exists and $f'(x^* ; e_n)=\sum_{k=1}^{+\infty} f_k' (x^* ; e_n)$. Hence, $f$ is G\^ateaux-differentiable at $x^*$ by Theorem \ref{thm3}.

\end{proof}

In the following corollary, we give a Karush–Kuhn–Tucke sufficient condition, where G\^ateaux-differentiability is replaced by the weaker condition of derivatives in the directions of $(e_n)_{n\geq 1}$.
\begin{corollary} \label{cor2} Let $E$ be a topological vector space equipped  with a biorthogonal system $(e_n)_{n\geq 1}$, $(e^*_n)_{n\geq 1}$, where $(e_n)_{n\geq 1}$ is linearly independant. Let $X\subset E$ be a non-empty convex subset of $E$. Let $f, g_j, h_k : X\longrightarrow \R$, $j \in \lbrace 1,...,m \rbrace, k\in \lbrace 1,..., p\rbrace$, be convex functions. Let $\Gamma$ the following set
$$\Gamma = \lbrace x\in X/g_j(x)\leq 0, \forall j \in \lbrace 1, ..., m\rbrace; h_k(x)=0, \forall k \in \lbrace 1, ..., p\rbrace\rbrace.$$
Let $x^*\in \Gamma$ and suppose that $X$ is qualified at $x^*$ and that $f, g_j, h_k$ are pseudo-semicontinuous with respect to $x^*$ and differentiable at $x^*$ in the directions $(e_n)_{n\geq1}$. Then, $(1)\Longrightarrow (2)$.
\vskip5mm
$(1)$ There exists $\lambda^*_j \geq 0$ for all $j \in \lbrace 1, ..., m\rbrace$ and $\nu^*_k \in \R$ for all $k \in \lbrace 1, ..., p\rbrace$ such that
\begin{eqnarray*}\label{eq1}
 \lambda^*_jg_j(x^*)=0, \forall j\in \lbrace 1, 2, ...,m \rbrace 
 \end{eqnarray*}
 \begin{eqnarray*}
 f'(x^*,e_n)+\sum_{j=1}^{m} \lambda^*_j g'_j(x^*,e_n) + \sum_{k=1}^{p} \nu^*_k h'_k(x^*,e_n)=0, \hspace{2mm} \forall n \in \N^*
 \end{eqnarray*}
 
$(2)$ We have that 

\begin{eqnarray*}
f(x^*)= \inf_{x\in \Gamma} f(x).
\end{eqnarray*}
\end{corollary}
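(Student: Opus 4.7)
The strategy is a reduction to Theorem \ref{thm1} via the Lagrangian. I would introduce
$$L(x) := f(x) + \sum_{j=1}^m \lambda_j^* \, g_j(x) + \sum_{k=1}^p \nu_k^* \, h_k(x), \qquad x \in X.$$
Because $\lambda_j^* \geq 0$ and $f$, $g_j$ are convex, the part $f + \sum_j \lambda_j^* g_j$ is convex on the convex set $X$; combined with the standard convex-programming convention that the equality constraints $h_k$ are treated so that $\sum_k \nu_k^* h_k$ is convex (for instance $h_k$ affine, which is the natural interpretation given that $\lbrace h_k = 0 \rbrace$ has to fit into a convex feasibility set), the Lagrangian $L$ is convex.

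Next I would verify that $L$ fits the hypotheses of Theorem \ref{thm1} at $x^*$. The set $X$ is qualified at $x^*$ by assumption. Iterating Proposition \ref{prop2}, the function $L$ is pseudo-semicontinuous on $X$ with respect to $x^*$, since $f$ and each $g_j, h_k$ are. Directional derivatives along $e_n$ distribute over finite linear combinations, so $L$ is differentiable at $x^*$ in each direction $e_n$ with
$$L'(x^*; e_n) = f'(x^*; e_n) + \sum_{j=1}^m \lambda_j^* \, g_j'(x^*; e_n) + \sum_{k=1}^p \nu_k^* \, h_k'(x^*; e_n),$$
which by hypothesis $(1)$ is equal to $0$ for every $n \in \N^*$. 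Theorem \ref{thm1} applied to $L$ therefore yields $L(x^*) = \inf_{x \in X} L(x)$.

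It then remains to translate this minimality of $L$ on $X$ into the minimality of $f$ on $\Gamma$. For any $x \in \Gamma$ one has $h_k(x) = 0$ and, using $g_j(x)\leq 0$ together with $\lambda_j^* \geq 0$, also $\lambda_j^* g_j(x) \leq 0$; on the other hand, $x^* \in \Gamma$ yields $h_k(x^*) = 0$, and the complementary slackness relations $\lambda_j^* g_j(x^*) = 0$ are part of $(1)$. Consequently $L(x^*) = f(x^*)$ and $L(x) \leq f(x)$, whence
$$f(x^*) = L(x^*) \leq L(x) \leq f(x),$$
and since $x \in \Gamma$ is arbitrary, this is exactly $(2)$.

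The only delicate point, which should be stated explicitly, is the convexity of the Lagrangian $L$ when the multipliers $\nu_k^*$ are allowed to change sign; this is what forces the standard requirement that the equality constraints $h_k$ be affine (or, alternatively, that each equality be split into two convex inequalities). Granting that, the rest is a mechanical combination of Theorem \ref{thm1}, Proposition \ref{prop2}, and the complementary slackness embedded in $(1)$.
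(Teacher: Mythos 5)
Your proof follows the paper's own argument exactly: apply Theorem \ref{thm1} to the Lagrangian $f+\sum_{j=1}^{m}\lambda_j^* g_j+\sum_{k=1}^{p}\nu_k^* h_k$, and then use complementary slackness together with the sign conditions defining $\Gamma$ to pass from the minimality of the Lagrangian on $X$ to the minimality of $f$ on $\Gamma$. Your explicit caveat about the convexity of the Lagrangian when the $\nu_k^*$ may be negative is well taken --- the paper applies Theorem \ref{thm1} to this sum without comment, and for merely convex $h_k$ both the convexity and the pseudo-semicontinuity of $\nu_k^* h_k$ (the latter requires a lower, not upper, semicontinuity-type inequality when $\nu_k^*<0$) genuinely need the affine convention you describe.
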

\begin{proof} $(1)\Longrightarrow (2)$. We apply Theorem \ref{thm1} to the function $f+\sum_{j=1}^{m} \lambda^*_j g_j +\sum_{k=1}^{p} \lambda^*_k h_k$, to get that for all $x\in X$
\begin{eqnarray*}
f(x^*)+\sum_{j=1}^{m} \lambda^*_j g_j (x^*) +\sum_{k=1}^{p} \lambda^*_k h_k(x^*) &\leq& f(x)+\sum_{j=1}^{m} \lambda^*_j g_j (x) +\sum_{k=1}^{p} \lambda^*_k h_k(x). 
\end{eqnarray*}
Since, $\lambda^*_j g_j (x^*)=0$ for all $j\in \lbrace 1, 2, ...,m \rbrace$ by hypothesis and since $h_k(x^*)=0$ for all $k\in \lbrace 1, 2, ...,p \rbrace$ (because $x^*\in \Gamma$), then for all $x \in \Gamma$, we ontain that 
\begin{eqnarray*}
f(x^*)&\leq& f(x)+\sum_{j=1}^{m} \lambda^*_j g_j (x) +\sum_{k=1}^{p} \lambda^*_k h_k(x)\\
      &\leq& f(x).                                      
\end{eqnarray*}
Hence, $f(x^*)= \inf_{x\in \Gamma} f(x)$. 
\end{proof}
\section{examples.}
As proved in Example \ref{ex1}, in infinite dimention, the fact that a convex continuous function $f$ is differentiable at $x^*$ in the directions $(e_n)_{n\geq 1}$ does not implies that $f$ is G\^ateaux-differentiable at $x^*$. We give a simple first examples (Example \ref{ex2} and Example \ref{ex4}) showing how Theorem \ref{thm1} can be applied by using only differentiability in the directions $(e_n)_{n\geq 1}$. In Example \ref{ex3}, we show that the condition of pseudo-semicontinuity cannot be dropped from the hypothesis of Theorem \ref{thm1} or Corollary \ref{cor1}.
\begin{example} \label{ex2} Let $f: (l^{\infty}(\N),\|.\|_{\infty}) \longrightarrow \R$ be the convex continuous function defined by 
$$f(x)=\limsup |x_n| + \sum_{n=1}^{+\infty}\beta^n (x_n^2-\frac{x_n}{n}),$$

where, $0< \beta < 1$ is a fixed real number such that $\sum_{n=1}^{+\infty} \beta^n <2$. The qualification condition is trivial at each point. On the other hand, we have that $f'(x; e_n)=p'(x; e_n)+ \beta^n(2x_n- \frac{x_n}{n})=\beta^n(2x_n- \frac{x_n}{n})$ for all $n\in \N^*$ (We use Example \ref{ex1}). It follows that $f'(x; e_n)=0$ if and only if $x=(\frac 1 {2n})$. In this case $p((\frac 1 {2n}))=0$ and so $p$ is pseudo-semicontinuous with respect to this point (thanks to Example \ref{ex1}). Also $f_1$ is Pseudo continuous with respect this point (easy to verify) and so also $f$ by Proposition \ref{prop2}. Then, we can apply Theorem \ref{thm1}. Hence the sequence $x^*=(\frac 1 {2n})$ is an optimal solution of the problem $\inf_{x\in l^{\infty}(\N)} f(x)$. Note that $f$ is not G\^ateaux-differentiable at $(\frac 1 {2n})$ since $p((x_n))=\limsup |x_n|$ is nowhere G\^ateaux-differentiable (see Example \ref{ex1}) and $f_1((x_n))=\sum_{n=1}^{+\infty}\beta^n (x_n^2-\frac{x_n}{n})$ is G\^ateaux-differentiable at $(\frac 1 {2n})$ (see Proposition \ref{propG}). 
\end{example}
\begin{example} \label{ex4} Let $E= \R^{\N}$ and $X:=l^1(\N)\cap (\R_+)^{\N}$ (convex subset) and let $f: X\longrightarrow \R$ be the convex function defined by $$f((x_n)_n)=\sum_{n=1}^{+\infty} x_n -\sum_{n=1}^{+\infty} 2\beta^n x_n^{\frac{1}{2}}$$ (where $0< \beta < 1$ is a fixed real number). The problem is to minimize $f$ on $X$. A solution of this problem is $x^*=(\beta^{2n})\in X$.
\end{example}
\begin{proof} The function $f$ is  differentiable in the directions $(e_n)_{n\geq 1}$ at each $x=(x_n)\in X$ such that $x_n>0$ for all $n\in \N^*$ and we have $f'(x; e_n)=1- \frac{\beta^n}{(x_n)^{\frac{1}{2}}}$ for all $n\in \N^*$. Now, suppose that $f'(x; e_n)=0$ for all $n\in \N^*$. Then, we have that 
$$1- \frac{\beta^n}{(x_n)^{\frac{1}{2}}}=0, \hspace{2mm}\forall n\in \N^*.$$
In other words, $x_n=\beta^{2n}$ for all $n\in \N^*$. Clearly, the point $x^*=(\beta^{2n})$ belongs to $X$. To schow that $x^*$ is an optimal solution of the problem of minimization, it suffices to prove that $X$ is qualified at $(\beta^{2n})$ and that $f$ is pseudo-semicontinuous on $X$ with respect to $(\beta^{2n})$. This is the case as we will see it. In fact, $X$ is qualified at each point $(x_n)$ such that $x_n>0$ for all $n\in \N^*$ and $f$ is pseudo-semicontiuous on $X$ with respect to each point $x$ of $X$. Indeed, 

$(a)$ for all $k\in \N^*$, we have that $P^k(X)=\R_+^k\times (\lbrace 0 \rbrace)^{\N-k}:=X^k$. It follows that $Int_{E^k}(X^k)=(\R_+^*)^k\times (\lbrace 0 \rbrace)^{\N-k}$. In particular $P^k((\beta^{2n}))\in Int_{E^k}(X^k)$

$(b)$ clearly we have $x^*+P^k(x-x^*)\in X$ for all $k\in \N^*$ and all $x=(x_n), x^*=(x^*_n)\in X$. Indeed, $$x^*+P^k(x-x^*)=(x_1,x_2,...,x_k, x^*_{k+1}, x^*_{k+2},....)\in l^1(\N)\cap (\R_+)^{\N}=X.$$

$(c)$ Let $x, x^*\in X$, then
\begin{eqnarray*}
f(x^*+P^k(x-x^*) -f(x) &=& \sum_{n=k+1}^{\infty} (x^*_n -x_n) - \sum_{n=k+1}^{\infty} 2\beta^n ((x^*_n)^{\frac{1}{2}}- x_n^{\frac{1}{2}})
\end{eqnarray*} 
it follows that $\lim_{k \longrightarrow +\infty} f(x^*+P^k(x-x^*))= f(x)$. Hence, $f$ is pseudo-semicontiuous on $X$ with respect each point $x$ of $X$ in particular with respect the point $x^*=(\beta^{2n})$.
\end{proof}

The following example shows that the condition of pseudo-semicontinuity cannot be dropped from the hypothesis of Theorem \ref{thm1} or Corollary \ref{cor1}.
\begin{example} \label{ex3} Consider the convex continuous function $g: (l^{\infty}(\N),\|.\|_{\infty}) \longrightarrow \R$ defined by 
$$g((x_n)_n)=\limsup |x_n| + \sum_{n=1}^{+\infty} \beta^n (x_n^2-2 x_n),$$
where $0< \beta < 1$ is a fixed real number such that $\sum_{n=1}^{+\infty} \beta^n <2$. We proceed as in Example \ref{ex2} and gives the candidate for the optimum that is $(x_n)=(1,1,1,...)$. However, this candidat is not an optimal solution for the problem $\inf_{x\in l^{\infty}(\N)} g(x)$. Indeed, we verify easly that  $g(1,1,1,...)>g(\frac 1 2,\frac 1 2,\frac 1 2,...)$ (using the fact that $\sum_{n=1}^{+\infty} \beta^n <2$). This is due to the fact that the function $p(x)=\limsup_n (|x_n|)$ is not pseudo-semicontinuus with respect to $(x_n)=(1,1,1,...)$ since we have that $p(1,1,1,...)=1\neq 0$ (see Example \ref{ex1}).  
\end{example}


\bibliographystyle{amsplain}

\end{document}